\theoremstyle{definition}
\newtheorem{theorem}{\bf Theorem}[section]
\newtheorem{remark}{\bf Remark}[section]
\begin{document}
\title{Superposition of time-changed Poisson processes and their hitting times}
\author{ A. Maheshwari$^*$,~E. Orsingher$^\#$, and A. S. Sengar$^@$}
\address{\small $^*$Postal address: Operations Management and Quantitative Techniques,
	Indian Institute of Management Indore, Prabandh Shikhar, Rau-Pithampur Road, Indore 453556, Madhya Pradesh, INDIA.\newline 
	$^\#$Sapienza University of Rome 
	P.le Aldo Moro, 5, 00185 Roma, ITALY.\newline
	$^@$Department of Mathematics, Indian Institute of Technology Madras, Chennai 600036, 	INDIA.}

\email{adityam@iimidr.ac.in,enzo.orsingher@uniroma1.it,ma15d201@smail.iitm.ac.in}	
 \subjclass[2010]{60G55; 60G51}
\keywords{Poisson process of order $i$, L\'evy subordinator, Hitting time probabilities.}

\begin{abstract}
	\noindent
	The Poisson process of order $i$ is a weighted sum of independent Poisson processes and is  used to model the flow of clients in different services. In the paper below we study some extensions of this process, for different forms of the weights and also with the time-changed versions, with Bern\v stein subordinator playing the role of time. We focus on the analysis of hitting times of these processes obtaining sometimes explicit distributions. 
	Since all the processes examined display a similar structure with multiple upward jumps sometimes they can skip all states with positive probability even on infinitely long time span.

\end{abstract}
\maketitle
\section{Introduction}

One of the most important processes in probability, capable of modelling different flows of events is the homogeneous or non-homogeneous Poisson process. Its widespread popularity stems from the fact that it has stationary and independent increments with exponentially distributed inter-arrival times. It is applicable for modelling of earthquakes, floods, arrival of clients in banks, car accidents and many other types of phenomenon.\\


\noindent Among its generalizations we have the Cox process (where the rate itself a stochastic process), the Poisson random fields and its time-changed versions which is  useful to model the wearing of machines (different when they are working or idle). Fractional extensions of the Poisson process have been introduced in the last two decades (see \cite{lask,sfpp,Maheshwari2019}) which include time and space fractional Poisson processes. Also state-dependent fractional Poisson processes have been studied (\cite{garra2015}). Long-range dependence property of the time-changed Poisson process is studied in \cite{LRD2014,biardapp,lrd2016} \\

\noindent In this paper, we are interested in the Poisson process of order $i$ and some of its extensions. In particular, we study the following three processes


\begin{align}
	Y(t)&:=\sum_{j=1}^{i}jN_j(t)=\sum_{j=1}^{i}\sum_{k=j}^{i}N_k(t)\label{def:ppok}\\
	Z(t)&:=\sum_{j=1}^{i}g(j)N_j(t)\label{def:ppokg}
	\shortintertext{and }
	W(t)&:=\sum_{j=1}^{i}jN_j(H^f(t))\label{def:tc-ppok},
\end{align}
where $N_j(t),t>0,1\leq j\leq i$ are independent, homogeneous Poisson processes of rate $\lambda$, $g$ is an integer-valued, bounded, monotonically increasing function, $H^f(t)$ are subordinators related to a Bern\v stein function $f$, independent from $N_j$. \\\\
Recently, Sengar \textit{et. al} (see \cite{TCPPoK}) have studied the process $W(t)$, that is a time-changed Poisson process of order $i$. Furthermore, in this paper the authors have studied the process $Y(t)$, time-changed with the inverse of $H^f$. The process above take jumps of amplitude bigger than $i$ and in the case where $g$ is an integer-valued function the maximal jump has amplitude  equal to $g(i)$.\\\\
 Processes of the form \eqref{def:ppok} can arise in the study of queues in banks or post offices where different services are offered to clients.
Since the types of services require a different amount of time to be carried out, the queues formed in front of each window is different. Therefore $jN_j(t)$ represents the random number of clients in the $j$-th queue while $Y(t)$ is the total random number of clients in the post office or bank agency. The process $Z(t)$ can be interpreted in the same way.\\

\noindent In this paper we study the hitting times for all processes, that is we study the distribution of the random variables
\begin{equation} \label{tk:def}
	T_k=\inf\{s:X(s)=k\},~k\geq 1.
\end{equation}
A similar analysis was made in the paper by Orsingher and Polito (2012) (see \cite{Orsingher2012}) for the iterated Poisson process, for the time-changed Poisson process by Orsingher and Toaldo (2015) (see \cite{OrsToa-Berns}) and also by Garra \textit{et. al} (2016) (see \cite{garra2015}) for the space fractional Poisson process. \\

\noindent In some cases it is possible to evaluate the hitting probabilities 
$$\mathbb{P}[T_k<\infty]$$
and, in particular for the process $Y(t),t>0$ we show that 
$$ \mathbb{P}[T_k<\infty]=
\left\{
\begin{array}{ll}
	\dfrac{k}{i}  & \mbox{if } k\leq i \\&\\
	1 & \mbox{if } k>i.
\end{array}
\right.$$
The same analysis is performed for the processes $Z(t)$ and $W(t)$. For the case where $H^f(t)$ is itself a Poisson process we extend the results obtained in \cite{Orsingher2012}.\\

\noindent One of our main concerns in the analysis of the hitting time $T_k$ defined in \eqref{tk:def}, where $X$ is one of the processes above. The random variables $T_k$ can be interpreted as the time necessary for the total queue to obtain for the first time the length $k$. We will show for $W$ that $\mathbb{P}[T_k<\infty]<1$ for all $k$ because all processes examined make upward jumps of arbitrary size with positive probability. \\

\noindent We will sometimes use the following notations.

	Let $N^{(i)}$ follow the Poisson distribution of order $i$ with rate parameter $\lambda>0$, then the probability mass function (\textit{pmf}) is given by
	\begin{equation*}\label{def:poiss-dist-k-1}
	\mathbb{P}[N^{(i)}=n]=\sum_{\substack{x_1,x_2,\ldots ,x_i\geq 0\\\sum_{j=1}^{i}jx_j=n}} e^{-i\lambda } \frac{\lambda^{x_1+\ldots+x_i}}{x_1!\ldots x_i!},~ n=0,1,\ldots,
	\end{equation*}
	where the summation is taken over all positive integers $x_1,x_2, \ldots,x_i$  such that $ x_1+2x_2+\ldots +ix_i = n$.

\section{Poisson process of order $i$}
The process defined in \eqref{def:ppok} during an infinitesimal interval of time of length $dt$ can perform jumps of size up to $\tfrac{i(i+1)}{2}$ and thus substantially differs from the classical Poisson process. This can be shown by observing that the probability generating function of $Y(t)$ reads
\begin{align}\label{pgf-yt}
	\mathbb{E}[u^{Y(t)}]=\mathbb{E}[u^{\sum_{j=1}^{i}jN_j(t)}]=\mathbb{E}[\Pi_{j=1}^{i}u^{jN_j(t)}]=\prod_{j=1}^{i}e^{-\lambda t(1-u^j)}=e^{-i\lambda t+\lambda t\sum_{j=1}^{i}u^j}.
\end{align}
For the increment $dY(t)$ we have therefore
\begin{align}
\mathbb{E}[u^{dY(t)}]&=e^{-i\lambda dt+\lambda dt\sum_{j=1}^{i}u^j}\\
&=(1-i\lambda dt)\left(1+\lambda dt\sum_{j=1}^{i}u^j+\ldots \right)\nonumber.
\end{align}
Therefore $Y(t)$ has the following distribution

\begin{align}\label{increments1}
		\left\{
	\begin{array}{ll}
	\mathbb{P}[dY(t)=j]=\lambda dt  & \mbox{for } 1\leq j\leq i \\
	\mathbb{P}[dY(t)=0]=1-i\lambda dt. & 
	\end{array}
	\right.
\end{align}
Since $Y(t)$ makes the jumps of length $j$ with uniform law in all time intervals of length $dt$ it can skip the first $i-1$ levels even during the infinite time horizon space.
 A similar behaviour was noted in many other cases like the iterated  Poisson process $N_1(N_2(t))$ where $N_k,k=1,2$ are independent homogeneous Poisson process (see \cite{Orsingher2012}), for the space-fractional Poisson process and its generalizations. In the present case, however, the first $k~(k<i)$ states are reached with positive probability.\\
In order to make more explicit the particular nature of $Y(t)$, we will show that the hitting time 
\begin{equation*}
	 T_k = \inf\left\{t: Y(t) = k \right\}= \inf\left\{t: \sum_{j=1}^{i}jN_j(t)=k \right\}
\end{equation*}
has the following form 
\begin{equation}\label{hitting1}
	\mathbb{P}[T_k < \infty]= \begin{cases} \dfrac{k}{i}~, 1\leq k \leq i-1 \\ 1 , ~ k\geq i, \end{cases}
\end{equation}
that it is initially increasing and then reaches a stationary form where all states are hit sooner or later with probability one.

\begin{remark}
	 For $i=1$, formula \eqref{hitting1} confirms that for the homogeneous Poisson processes all states are visited with probability one during an infinite time interval.
\end{remark}

From \eqref{pgf-yt} we can infer that

\begin{equation}\label{pmf-yt}
	\mathbb{P}[Y(s)=n]=\sum_{\substack{x_1,x_2,\ldots ,x_i\geq 0\\\sum_{j=1}^{i}jx_j=n}} e^{-i\lambda s} \frac{(\lambda s)^{x_1+\ldots+x_i}}{x_1!\ldots x_i!}.
\end{equation}
Result \eqref{pmf-yt} can also be written down directly by applying convolution arguments.

\noindent The hitting time of a stochastic process $Y(t),t\geq 0$ be the first time at which the given process hits a level $k$, denoted as $T_k$, and is defined as
$$ T_k = \inf\{t: Y(t) = k \}. $$

\begin{theorem}
	Let $ N^{(i)}(t)$ be the Poisson process of order $i$ and $T_k$ be the first hitting time of $N^{(i)}(t)$, then
	$$\mathbb{P}[T_k < \infty]= \begin{cases} \frac{k}{i}~, 1\leq k \leq i-1 \\ 1 , ~ k\geq i \end{cases}.$$
\end{theorem}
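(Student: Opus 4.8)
The plan is to reduce the continuous-time hitting problem for $N^{(i)}=Y$ to a first-passage question for its embedded jump chain, and then to solve the resulting recurrence. The infinitesimal description \eqref{increments1} (equivalently the product form of the generating function \eqref{pgf-yt}) shows that the successive jumps of $Y$ are independent and that, at each jump, the increment is uniformly distributed over $\{1,2,\ldots,i\}$: the superposition of the $i$ independent rate-$\lambda$ Poisson clocks driving $N_1,\ldots,N_i$ is a rate-$i\lambda$ Poisson process whose jumps select the size $j$ with probability $1/i$. Hence the values taken by $Y$ are exactly the partial sums $S_n=\xi_1+\cdots+\xi_n$ of a random walk with $S_0=0$ and i.i.d.\ increments $\xi_m$ uniform on $\{1,\ldots,i\}$. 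Since $Y$ is non-decreasing with strictly positive bounded jumps and makes infinitely many jumps on $[0,\infty)$ almost surely, the event $\{T_k<\infty\}$ is precisely the event that $k$ lies in the range of $(S_n)$, so writing $u_k:=\mathbb{P}[\,\exists\,n\ge 0:S_n=k\,]$ the theorem becomes the assertion that $u_k=k/i$ for $1\le k\le i-1$ and $u_k=1$ for $k\ge i$.

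Next I would set up a first-step recurrence by conditioning on $\xi_1$. If $\xi_1=j$ with $j<k$, spatial homogeneity restarts the walk at height $j$, so the conditional probability of still reaching $k$ is $u_{k-j}$; if $j=k$ the level is hit at once; and if $j>k$ the walk has over-jumped $k$ and, being strictly increasing, can never return to it. Averaging over the uniform law of $\xi_1$ gives
\begin{equation*}
u_k=\frac{1}{i}\sum_{j=1}^{\min(i,k)}u_{k-j},\qquad k\ge 1,\qquad u_0=1 .
\end{equation*}
The whole theorem is then a statement about the solution of this linear recurrence, and the two regimes $k\le i-1$ and $k\ge i$ correspond to whether or not the full window $\{1,\ldots,i\}$ of admissible increments lies below $k$.

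To solve the recurrence I would introduce the generating functions $U(z)=\sum_{k\ge 0}u_k z^k$ and $f(z)=\frac{1}{i}\sum_{j=1}^{i}z^{j}$. Multiplying the recurrence by $z^k$ and summing over $k\ge 1$ yields the renewal identity $U(z)=1+f(z)U(z)$, hence $U(z)=(1-f(z))^{-1}$; extracting coefficients, and treating the part with $k<i$ separately from the tail $k\ge i$, is meant to produce the claimed closed form. A parallel and perhaps more transparent route is to conjecture the answer $u_k=\min(k,i)/i$ and attempt to verify it by induction, with one base computation at $k=1$ (where only $\xi_1=1$ works, giving $u_1=1/i$) together with a separate inductive step on either side of the boundary $k=i$.

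The delicate point, and the step I expect to be the main obstacle, is the over-jump phenomenon hidden in the recurrence: the state $k$ is attained only if some partial sum lands exactly on $k$, and the walk may step over $k$ not merely on its first jump but after arriving at any intermediate height below it. Tracking this cumulative probability of over-shooting, rather than over-shooting on the first jump alone, is exactly what the recurrence encodes, and carrying the bookkeeping through the transition at $k=i$, where the two regimes meet, is where the argument must be handled with care; the generating-function identity $U=(1-f)^{-1}$ is the tool I would lean on to make that passage rigorous.
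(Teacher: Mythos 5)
Your reduction to the embedded jump chain is correct, and it is in substance the same first step as the paper's: conditioning on the size of the last jump is exactly what the paper does when it writes $\mathbb{P}[T_k\in ds]=\sum_h \mathbb{P}[N^{(i)}(s)=k-h]\,\mathbb{P}[N^{(i)}(ds)=h]$, and the quantity $\lambda\int_0^\infty\mathbb{P}[N^{(i)}(s)=n]\,ds$ appearing there equals $\tfrac{1}{i}u_n$ in your notation (expected occupation time of level $n$ divided by the mean holding time $1/(i\lambda)$). The genuine problem is the step you defer to the end: the recurrence
\begin{equation*}
u_k=\frac{1}{i}\sum_{j=1}^{\min(i,k)}u_{k-j},\qquad u_0=1,
\end{equation*}
does \emph{not} have $u_k=\min(k,i)/i$ as its solution, so neither the coefficient extraction from $U(z)=(1-f(z))^{-1}$ nor the proposed induction can be completed. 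Already at $k=2$ the recurrence gives $u_2=\tfrac{1}{i}(u_0+u_1)=\tfrac{i+1}{i^2}$, which differs from the claimed value $\tfrac{2}{i}$ for every $i\geq 3$, and for $i=2$ gives $u_2=\tfrac{3}{4}$ rather than the claimed value $1$. More structurally, since the increment law is aperiodic with mean $(i+1)/2$, the Erd\H{o}s--Feller--Pollard renewal theorem forces $u_k\to 2/(i+1)<1$, so $u_k$ cannot be identically $1$ for $k\geq i$ (unless $i=1$).

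This is not a repairable gap in your setup; your (correct) recurrence actually refutes the stated closed form rather than proving it, and the discrepancy traces back to an error in the paper's own proof. The paper breaks at the substitution ``Set $x_j=n_j$, \ldots, i.e.\ $n_1+\cdots+n_i=x$,'' where the weighted constraint $\sum_j jx_j=n$ of the inner sum is silently replaced by the unweighted constraint $\sum_j n_j=x$ so that the multinomial theorem can be invoked. The identity this produces,
\begin{equation*}
\sum_{\substack{x_1,\ldots,x_i\geq 0\\ \sum_{j} jx_j=n}}\frac{(x_1+\cdots+x_i)!}{x_1!\cdots x_i!}\,i^{-(x_1+\cdots+x_i)}=1,
\end{equation*}
is false for $n\geq 2$: for $n=2$ the left-hand side has only the terms $x_1=2$ and $x_2=1$ and equals $\tfrac{1}{i}+\tfrac{1}{i^2}$. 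In fact this left-hand side is precisely your $u_n$ (the walk is strictly increasing, so $u_n=\sum_m\mathbb{P}[S_m=n]$, which is exactly this sum), so the paper and you are evaluating the same renewal sequence; the paper evaluates it incorrectly, while your method, carried through honestly, yields $\mathbb{P}[T_1<\infty]=1/i$, $\mathbb{P}[T_2<\infty]=(i+1)/i^2$, and in the limit $2/(i+1)$ --- not the dichotomy $k/i$ versus $1$ asserted in the theorem.
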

\begin{proof}
	Let $\mathbb{P}[T_k \in ds],~ s>0 $ be the distribution function of $T_k$, then
\begin{align}
\mathbb{P}[T_k \in ds] =& \sum_{h=1}^{k} \mathbb{P}[N^{(i)}(s)=k-h,N^{(i)}(s+ds)= k]\nonumber\\
=& \sum_{h=1}^{k} \mathbb{P}[N^{(i)}(s)=k-h,N^{(i)}(s)+N^{(i)}(ds)=k]\nonumber\\
=& \sum_{h=1}^{k} \mathbb{P}[N^{(i)}(s)=k-h] \mathbb{P}[N^{(i)}(ds)=h]\nonumber \\
=& \sum_{h=1}^k \mathbb{P}[N^{(i)}(s)=k-h] \lambda ds,\label{tk-theorem}
\end{align}
where the last equality follows from 
\begin{align*}
\mathbb{E}[u^{N^{(i)}(ds)}] =& \exp[-i\lambda ds (1-\mathbb{E}u^X)]\\ =& 1-i\lambda ds (1-\mathbb{E}u^X) + o(ds)\\=& 1-i\lambda ds +i\lambda ds \left(\frac{1}{i}\sum_{j=1}^i u^j\right)\\=& 1-i\lambda ds + \lambda ds \left(\sum_{j=1}^i u^j\right).
\end{align*}
Hence, $$ \mathbb{P}[N^{(i)}(ds)=l]= \begin{cases} \lambda ds,~ 1\leq l \leq i\\ 1-i \lambda ds,~ l=0. \end{cases}$$
Using the above result in equation \eqref{tk-theorem}, we get
\begin{align*}
\mathbb{P}[T_k \in ds] =& \lambda \sum_{h=1}^k \mathbb{P}[N^{(i)}(s)=k-h] ds \\ =& \lambda[\mathbb{P}(N^{(i)}(s)=k-1) ds+\mathbb{P}(N^{(i)}(s)=k-2) ds+\ldots +\mathbb{P}(N^{(i)}(s)=0) ds]\\ =& \lambda \sum_{n=0}^{k-1}\mathbb{P}(N^{(i)}(s)=n) ds. 
\end{align*}
We now compute the hitting time probability of level $k$ of $N^{(i)}(t)$.\\
When $ k< i$, then
\begin{align*}
\mathbb{P}[T_k<\infty]=\int_0^{\infty} \mathbb{P}[T_k \in ds]=& \lambda\int_0^{\infty}\sum_{n=0}^{k-1}\mathbb{P}(N^{(k)}(s)=n) ds\\
=& \lambda\int_0^{\infty}\sum_{n=0}^{k-1} \left[ \sum_{\substack{x_1,x_2,\ldots ,x_i\geq 0\\\sum_{j=1}^{i}jx_j=n}} e^{-i\lambda s} \frac{(\lambda s)^{x_1+\ldots+x_i}}{x_1!\ldots x_i!}\right]ds\\
=& \lambda \sum_{n=0}^{k-1} \sum_{\substack{x_1,x_2,\ldots ,x_i\geq 0\\\sum_{j=1}^{i}jx_j=n}} \int_0^{\infty} e^{-i\lambda s} \frac{(\lambda s)^{x_1+\ldots+x_i}}{x_1!\ldots x_i!}ds \\
 =& \lambda \sum_{n=0}^{k-1} \sum_{\substack{x_1,x_2,\ldots ,x_i\geq 0\\\sum_{j=1}^{i}jx_j=n}}\frac{\lambda^{x_1+\ldots+x_i}}{x_1!\ldots x_i!}\int_0^{\infty} e^{-i\lambda s}s^{x_1+\ldots+x_i}ds \\
 =& \lambda \sum_{n=0}^{k-1} \sum_{\substack{x_1,x_2,\ldots ,x_i\geq 0\\\sum_{j=1}^{i}jx_j=n}}\frac{\lambda^{x_1+\ldots+x_i}}{x_1!\ldots x_i!} \frac{\Gamma(x_1+\ldots+x_i+1)}{(i\lambda)^{x_1+\ldots+x_i+1}}\\
 =& \sum_{n=0}^{k-1} \sum_{\substack{x_1,x_2,\ldots ,x_i\geq 0\\\sum_{j=1}^{i}jx_j=n}}\frac{\Gamma(x_1+\ldots+x_i+1)}{x_1!\ldots x_i!}i^{-(x_1+\ldots+x_i+1)}.
\end{align*}
Set $ x_j=n_j,~~1\leq j \leq i $ and $ n=x+\sum_{j=1}^i (j-1)n_j$ i.e. $ n_1+n_2+\ldots +n_i=x$ in the internal summation.


\begin{align*}
\int_0^{\infty} \mathbb{P}[T_k \in ds] =& \sum_{n=0}^{k-1} \sum_{\substack{n_1,n_2,\ldots ,n_i\geq 0\\\sum_{j=1}^{i}n_j=x}}\frac{\Gamma(n_1+n_2+\ldots +n_i+1)}{n_1!n_2!\ldots n_i!}i^{-(n_1+n_2+\ldots +n_i+1)}\\
=& \sum_{n=0}^{k-1} \frac{1}{i} \sum_{\substack{n_1,n_2,\ldots ,n_i\geq 0\\\sum_{j=1}^{i}n_j=x}}\frac{x!}{n_1!n_2!\ldots n_i!}\frac{1}{k^{n_1+n_2+\ldots +n_i}}\\
 =& \sum_{n=0}^{k-1} \frac{1}{i} \left[\frac{1}{i}+\frac{1}{i}+\ldots +\frac{1}{i}\right]^x \\
 =& \sum_{n=0}^{k-1} \frac{1}{i} \\=& \frac{k}{i}< 1 \text{ for }k<i.
\end{align*}
When $ k\geq i$, then
\begin{align*}
\mathbb{P}[T_k \in ds] =& \sum_{h=k-i}^{k-1} \mathbb{P}(N^{(i)}(s)=k-h,N^{(i)}(s+ds)=k)\\
=& \sum_{h=k-i}^{k-1} \mathbb{P}(N^{(i)}(s)=k-h,N^{(i)}(s)+N^{(i)}(ds)=k)\\
=& \sum_{h=k-i}^{k-1} \mathbb{P}(N^{(i)}(s)=k-h) \mathbb{P}(N^{(i)}(ds)=h) \\ 
=& \sum_{h=k-i}^{k-1} \mathbb{P}(N^{(i)}(s)=k-h) \lambda ds\\
=& \lambda\sum_{m=1}^i \mathbb{P}[N^{(i)}(s)=m]ds.
\end{align*}
Now we have that
\begin{align*}
\int_0^{\infty} \mathbb{P}[T_k \in ds] =& \lambda\int_0^{\infty}\sum_{m=1}^i\mathbb{P}(N^{(i)}(s)=m) ds\\
=& \lambda\int_0^{\infty}\sum_{m=1}^i \left[ \sum_{\substack{x_1,x_2,\ldots ,x_i\geq 0\\\sum_{j=1}^{i}jx_j=n}} e^{-i\lambda s} \frac{(\lambda s)^{x_1+\ldots+x_i}}{x_1!\ldots x_i!}\right]ds\\
=& \lambda \sum_{m=1}^i \sum_{\substack{x_1,x_2,\ldots ,x_i\geq 0\\\sum_{j=1}^{i}jx_j=n}} \int_0^{\infty} e^{-i\lambda s} \frac{(\lambda s)^{x_1+\ldots+x_i}}{x_1!\ldots x_i!}ds \\ 
=& \lambda \sum_{m=1}^i \sum_{\substack{x_1,x_2,\ldots ,x_i\geq 0\\\sum_{j=1}^{i}jx_j=n}}\frac{\lambda^{x_1+\ldots+x_i}}{x_1!\ldots x_i!}\int_0^{\infty} e^{-i\lambda s}s^{x_1+\ldots+x_i}ds \\
=& \lambda \sum_{m=1}^i \sum_{\substack{x_1,x_2,\ldots ,x_i\geq 0\\\sum_{j=1}^{i}jx_j=n}}\frac{\lambda^{x_1+\ldots+x_i}}{x_1!\ldots x_i!} \frac{\Gamma(x_1+\ldots+x_i+1)}{(k\lambda)^{x_1+\ldots+x_i+1}}\\
=& \sum_{m=1}^i \sum_{\substack{x_1,x_2,\ldots ,x_i\geq 0\\\sum_{j=1}^{i}jx_j=n}}\frac{\Gamma(x_1+\ldots+x_i+1)}{x_1!\ldots x_i!}(k)^{-(x_1+\ldots+x_i+1)}.
\end{align*}
Set $ x_j=n_j,~~1\leq j \leq i $ and $ m=x+\sum_{j=1}^i (j-1)n_j$ i.e. $ n_1+n_2+\ldots +n_i=x$.
\begin{align*}
\int_0^{\infty} \mathbb{P}[T_k \in ds] =& \sum_{m=1}^i \sum_{\substack{n_1,n_2,\ldots ,n_i\geq 0\\		\sum_{j=1}^{i}n_j=x}}\frac{\Gamma(n_1+n_2+\ldots +n_i+1)}{n_1!n_2!\ldots n_i!}(k)^{-(n_1+n_2+\ldots +n_i+1)}\\
=& \sum_{m=1}^i \frac{1}{i} \sum_{\substack{n_1,n_2,\ldots ,n_i\geq 0\\\sum_{j=1}^{i}n_j=x}}\frac{x!}{n_1!n_2!\ldots n_i!}\frac{1}{i^{n_1+n_2+\ldots +n_i}}\\
 =& \sum_{m=1}^i \frac{1}{i} \left[\frac{1}{i}+\frac{1}{i}+\ldots +\frac{1}{i}\right]^x \\
 =&\sum_{m=1}^i\frac{1}{i} = \dfrac{i}{i} = 1.
\end{align*}
Hence, $$\mathbb{P}[T_k< \infty]= \begin{cases} \frac{k}{i}~, 1\leq k \leq i-1 \\ 1 , ~ k\geq i \end{cases}.$$
\end{proof}
\begin{remark}
	The above theorem states that the Poisson process of order $i$ hits every level $k$ when $k$ is bigger than $i$ and with probability $k/i$ it will hit level $k$ if $k$ is less than $i$. This  is consistent with the fact that arrivals are in packets of size $i$.
\end{remark}
Since the maximal size of jumps of process $Y(t)$ is $i$, all states larger or equal $i$ can be obtained with probability one, while the process can avoid with positive probability the first $i-1$ states.
We have $ N_1(t)+2N_2(t)+\ldots  +iN_i(t) = \sum_{j=1}^{N(t)}X_j^i, $  where $X_j^i$ are independent integer-valued random variables uniformly distributed on the set $\{1,2,\ldots,i\}$.

\section{Weighted sum of Poisson processes}
In this section we study the behaviour of the process
\begin{equation}\label{process-2}
	Z(t)=\sum_{j=1}^{i}g(j)N_j(t),
\end{equation}
where $N_j, j=1,\ldots,n$ are independent, homogeneous Poisson processes and $g$ is an integer-valued, bounded and increasing function. We now give the characterisation of the weighted sum of Poisson processes in the following theorem.
\begin{theorem}
	The process $Z$ defined in \eqref{process-2} can be represented as a compound Poisson processes, that is
	\begin{equation}
Z(t)=\sum_{j=1}^{i}g(j)N_j(t)\stackrel{d}{=}\sum_{j=1}^{N(t)}g(X_j^i),	
	\end{equation}
 where the random variables $X_j^i$ are independent, discrete and uniform on the set $\{1,2,\ldots,i\}$ and $N(t)$ is Poisson with parameter $i\lambda$.
\end{theorem}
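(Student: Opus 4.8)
The plan is to prove the distributional identity by comparing the probability generating functions (pgf) of the two sides and then invoking the L\'evy structure of both processes. This parallels the computation already carried out in \eqref{pgf-yt}, which is exactly the special case $g(j)=j$.

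First I would compute the pgf of the left-hand side. Using the independence of the processes $N_j$ and the fact that each $N_j(t)$ is Poisson with rate $\lambda$, so that $\mathbb{E}[u^{N_j(t)}]=e^{-\lambda t(1-u)}$, I obtain
\begin{align*}
\mathbb{E}[u^{Z(t)}]=\mathbb{E}\Big[\prod_{j=1}^{i}u^{g(j)N_j(t)}\Big]=\prod_{j=1}^{i}\mathbb{E}[u^{g(j)N_j(t)}]=\prod_{j=1}^{i}e^{-\lambda t(1-u^{g(j)})}=e^{-i\lambda t+\lambda t\sum_{j=1}^{i}u^{g(j)}}.
\end{align*}
Here the hypothesis that $g$ is integer-valued is what guarantees that $u^{g(j)}$ is a genuine power and that $Z(t)$ takes values in the nonnegative integers, so that working with the pgf is legitimate.

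Next I would compute the pgf of the compound Poisson process. Conditioning on $N(t)$ and using that $N(t)$ is Poisson with parameter $i\lambda$ together with the independence and common uniform law of the $X_j^i$, the standard compound-Poisson formula gives
\begin{align*}
\mathbb{E}\Big[u^{\sum_{j=1}^{N(t)}g(X_j^i)}\Big]=\exp\big[-i\lambda t\big(1-\mathbb{E}[u^{g(X_1^i)}]\big)\big].
\end{align*}
Since $X_1^i$ is uniform on $\{1,\ldots,i\}$ we have $\mathbb{E}[u^{g(X_1^i)}]=\tfrac{1}{i}\sum_{j=1}^{i}u^{g(j)}$, and substituting this reduces the right-hand pgf to $e^{-i\lambda t+\lambda t\sum_{j=1}^{i}u^{g(j)}}$, which coincides with the pgf of $Z(t)$ computed above. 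Equivalently, both processes have the same L\'evy measure $\lambda\sum_{j=1}^{i}\delta_{g(j)}$.

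Finally I would upgrade the equality of one-dimensional laws to equality in distribution as processes. Both $Z$ and the compound Poisson process start at $0$ and have stationary, independent increments, hence are L\'evy processes; a L\'evy process is determined by the law of a single marginal (equivalently by its characteristic/Laplace exponent), so the coincidence of the pgf's at every fixed $t$ forces the two processes to share all finite-dimensional distributions. The main point to be careful about is precisely this last step, namely justifying that matching the marginal pgf suffices; the generating-function computations themselves are routine.
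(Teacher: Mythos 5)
Your proposal is correct and follows essentially the same route as the paper: both compute the probability generating function of $Z(t)$ via independence of the $N_j$'s and the pgf of the compound sum via conditioning on $N(t)$, and conclude from their coincidence. Your closing remark on upgrading equality of marginals to equality of processes via the L\'evy (stationary independent increments) structure is a careful addition the paper leaves implicit, but it does not change the substance of the argument.
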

\begin{proof}
The probability generating function (\textit{pgf}) of $Z(t)$ reads
\begin{align*}
\mathbb{E}[u^{Z(t)}] =& \mathbb{E}[u^{\sum_{j=1}^{i}g(j)N_j(t)}]\\ =& \prod_{j=1}^i \mathbb{E}[u^{g(j)N_j(t)}] \\ 
=& \prod_{j=1}^i \exp[-\lambda t+ \lambda t u^{g(j)}]\\
 =& \exp[-i\lambda t+ \lambda t \sum_{j=1}^i u^{g(j)}].
\end{align*}
Now we will find the \textit{pgf} of R.H.S.
\begin{align*}
\mathbb{E}u^{\sum_{j=1}^{N(t)}g(X_j^i)} =& \sum_{n=0}^{\infty}\mathbb{E}[u^{\sum_{j=1}^{N(t)}g(X_j^i)} |N(t)=n] \mathbb{P}(N(t)=n)\\ 
=& \sum_{n=0}^{\infty}\mathbb{E}[u^{\sum_{j=1}^{n}g(X_j^i)}] \mathbb{P}(N(t)=n)\\ 
=& \sum_{n=0}^{\infty} \prod_{j=1}^n \mathbb{E}u^{g(X_j)}\mathbb{P}(N(t)=n)\\
 =&  \sum_{n=0}^{\infty} (\mathbb{E}u^{g(X)})^n e^{-i\lambda t}\frac{(i\lambda t)^n}{n!}\\ =& \exp(-i\lambda t)\exp\left[i\lambda t \mathbb{E}[u^{g(X)}]\right]\\
  =& \exp\left[-i\lambda t(1-\mathbb{E}[u^{g(X)}])\right]\\
 =& \exp\left[-i\lambda t\left(1-\frac{1}{i}\sum_{j=1}^iu^{g(j)}\right)\right]\\ =& \exp\left[-i\lambda t+ \lambda t \sum_{j=1}^i u^{g(j)}\right],
\end{align*}
and this completes the proof.
\end{proof}
\begin{remark}
For $g(j)=j$, we retrieve the \textit{pgf} of $Y(t)$. Furthermore
$$ \mathbb{P}[Z(t)=n]= \sum_{\substack{x_1,\ldots ,x_i\geq 0\\\sum_{j=1}^{i}g(j)x_j=n}} e^{-i\lambda t} \frac{ (\lambda t)^{x_1+\ldots+x_i}}{x_1!\ldots x_i!}.$$
\end{remark}
We pass now to the analysis of the hitting times of $T_k,k\geq 1$ of the process $Z(t)$. In the following theorem, we compute the distribution of $T_k$.
%

\begin{theorem}
The hitting time $T_k,k\geq 1$ has distribution 

\begin{equation}
\mathbb{P}[T_k \in ds] =\lambda ds\sum_{\substack{{h=1}\\ g(h)\leq k}}^k\sum_{\substack{x_1,\ldots ,x_i\geq 0\\\sum_{j=1}^{i}g(j)x_j=k-g(h)}} e^{-i\lambda t} \frac{ (\lambda t)^{x_1+\ldots+x_i}}{x_1!\ldots x_i!},~s>0.
\end{equation}

\end{theorem}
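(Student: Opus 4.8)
The plan is to mimic the first–passage argument carried out for $N^{(i)}(t)$ in the first theorem, now adapted to the jump amplitudes dictated by $g$. By the characterization theorem, $Z(t)$ is a compound Poisson process, hence a non-decreasing pure-jump process with stationary and independent increments. Consequently the \emph{first} arrival at level $k$ can occur only by the process sitting at an admissible level $k-g(h)$ just before time $s$ and then executing a single upward jump of size $g(h)$ that lands exactly on $k$; no separate ``not yet hit'' condition is needed, because monotonicity forces $Z(u)\le Z(s)=k-g(h)<k$ for all $u\le s$. I would therefore begin with the decomposition
\begin{equation*}
\mathbb{P}[T_k \in ds] = \sum_{\substack{h=1\\ g(h)\le k}}^{i} \mathbb{P}\big[Z(s)=k-g(h),\, Z(s+ds)=k\big],
\end{equation*}
the summands being mutually exclusive in the infinitesimal interval since at most one jump occurs in $ds$.

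The second step is to factorize each term through stationary independent increments, $\mathbb{P}[Z(s)=k-g(h),\,Z(s+ds)=k]=\mathbb{P}[Z(s)=k-g(h)]\,\mathbb{P}[dZ=g(h)]$, and to read off the infinitesimal jump law from the probability generating function established above. Expanding $\mathbb{E}[u^{dZ}]=\exp\!\big[-i\lambda\,ds+\lambda\,ds\sum_{j=1}^{i}u^{g(j)}\big]$ to first order in $ds$ yields $1-i\lambda\,ds+\lambda\,ds\sum_{j=1}^{i}u^{g(j)}+o(ds)$, whence $\mathbb{P}[dZ=g(h)]=\lambda\,ds$ for each admissible $h$ and $\mathbb{P}[dZ=0]=1-i\lambda\,ds$.

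The final step is purely substitutional: insert the explicit \emph{pmf} of $Z(s)$ recorded in the remark after the characterization theorem, taken at $n=k-g(h)$, namely $\mathbb{P}[Z(s)=k-g(h)]=\sum_{\sum_j g(j)x_j=k-g(h)} e^{-i\lambda s}(\lambda s)^{x_1+\cdots+x_i}/(x_1!\cdots x_i!)$, and pull the common factor $\lambda\,ds$ outside both sums. This reproduces the claimed formula (with $s$ in place of the $t$ appearing in the statement, which I take to be a typographical slip).

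The only genuine subtlety I anticipate is bookkeeping for the index set of $h$. The first-order expansion assigns weight $\lambda\,ds$ to each \emph{distinct} jump amplitude, so the clean ``one $\lambda\,ds$ per $h$'' reading requires that $g(1)<\cdots<g(i)$ be strictly increasing; if ``monotonically increasing'' is read merely as non-decreasing, repeated values force $\mathbb{P}[dZ=m]=\lambda\,ds\,\#\{j:g(j)=m\}$ and the outer sum must be reinterpreted as running over distinct admissible values $g(h)$. I would fix the strictly increasing convention explicitly. The constraint $g(h)\le k$ then just guarantees $k-g(h)\ge 0$ so that $Z(s)$ can occupy that level, and since strict monotonicity of an integer-valued $g$ gives $g(h)\ge h$, the bound $g(h)\le k$ already confines $h$ to $\{1,\dots,\min(i,k)\}$, reconciling the upper limit written as $k$ with the physical cap $h\le i$.
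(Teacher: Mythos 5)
Your proof follows the paper's argument essentially verbatim: the same decomposition of $\mathbb{P}[T_k\in ds]$ over the admissible last-jump amplitudes $g(h)$, the same factorization via stationary independent increments, the same first-order expansion of the probability generating function giving $\mathbb{P}[dZ(s)=g(h)]=\lambda\,ds$, and the same substitution of the pmf of $Z(s)$. Your added observations --- that monotonicity of $Z$ makes the ``not yet hit'' condition automatic, that the statement's $t$ should read $s$, and that the termwise identification $\mathbb{P}[dZ(s)=g(h)]=\lambda\,ds$ requires $g$ to be strictly increasing (injective) --- are correct refinements of points the paper leaves implicit.
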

\begin{proof}
Recall that 
\begin{equation*}
	Z(t)=g(1)N_1(t)+g(2)N_2(t)+\ldots +g(i)N_i(t).
\end{equation*}We begin with
\begin{align*}
\mathbb{P}[T_k \in ds] =& \sum_{\substack{{h=1}\\ g(h)\leq i}}^i \mathbb{P}[Z(s)=k-g(h),Z(s+ds)= k]\\
=& \sum_{\substack{{h=1}\\ g(h)\leq k}}^k \mathbb{P}[Z(s)=k-g(h),Z(s)+dZ(s)=k]\\
=& \sum_{\substack{{h=1}\\ g(h)\leq k}}^k \mathbb{P}[Z(s)=k-g(h)]\mathbb{P}[dZ(s))=g(h)] \\
=& \sum_{\substack{{h=1}\\ g(h)\leq k}}^k \mathbb{P}[Z(s)=k-g(h)] \lambda ds\\
=&\lambda ds\sum_{\substack{{h=1}\\ g(h)\leq k}}^k\sum_{\substack{x_1,\ldots ,x_i\geq 0\\\sum_{j=1}^{i}g(j)x_j=k-g(h)}} e^{-i\lambda s} \frac{ (\lambda s)^{x_1+\ldots+x_i}}{x_1!\ldots x_i!},
\end{align*}
which completes the proof.
\end{proof}
We now find the probability that $Z$ hits an arbitrary  level $k$.
\begin{theorem}
The probability of hitting any level $k$ is given by
\begin{equation}
	\mathbb{P}[T_k < \infty]= \begin{cases} \dfrac{\#(h:1\leq g(h)\leq k)}{i}~, 1\leq k <g(i)\\ 1 , ~ k\geq g(i) \end{cases}.
\end{equation}
\end{theorem}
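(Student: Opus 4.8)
The plan is to obtain $\mathbb{P}[T_k<\infty]$ by integrating over the whole half‑line the density of $T_k$ furnished by the previous theorem, exactly as was done for the process $Y(t)$. First I would write
\[
\mathbb{P}[T_k<\infty]=\int_0^\infty \mathbb{P}[T_k\in ds]
=\lambda\sum_{\substack{h=1\\ g(h)\le k}}^{k}\ \sum_{\substack{x_1,\ldots,x_i\ge 0\\ \sum_{j=1}^i g(j)x_j=k-g(h)}}\frac{\lambda^{x_1+\cdots+x_i}}{x_1!\cdots x_i!}\int_0^\infty e^{-i\lambda s}s^{\,x_1+\cdots+x_i}\,ds,
\]
where the exchange of the (nonnegative) summations with the integral is justified by Tonelli's theorem. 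Evaluating the Gamma integral through $\int_0^\infty e^{-i\lambda s}s^{m}\,ds=m!/(i\lambda)^{m+1}$ with $m=x_1+\cdots+x_i$ cancels the powers of $\lambda$ and leaves
\[
\mathbb{P}[T_k<\infty]=\sum_{\substack{h=1\\ g(h)\le k}}^{k}\frac1i\sum_{\substack{x_1,\ldots,x_i\ge 0\\ \sum_{j=1}^i g(j)x_j=k-g(h)}}\frac{(x_1+\cdots+x_i)!}{x_1!\cdots x_i!}\,\frac{1}{i^{\,x_1+\cdots+x_i}}.
\]

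The second step is the combinatorial reduction of the inner sum, carried out precisely as in the proof of the theorem for $Y(t)$: setting $x=x_1+\cdots+x_i$ and regrouping the multinomial weights, the inner sum reduces through $\bigl(\tfrac1i+\cdots+\tfrac1i\bigr)^{x}=1$, so that for every admissible last jump $g(h)$ the inner sum equals $1$. Consequently each index $h$ with $g(h)\le k$ contributes exactly $1/i$, and
\[
\mathbb{P}[T_k<\infty]=\frac{1}{i}\,\#\{h:1\le g(h)\le k\}.
\]

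Finally I would split on the size of $k$ relative to $g(i)$. Since $g$ is monotonically increasing, $g(h)\le g(i)$ for all $1\le h\le i$; hence when $k\ge g(i)$ every index $h\in\{1,\ldots,i\}$ satisfies $g(h)\le k$, the count equals $i$, and the probability is $i/i=1$, while for $1\le k<g(i)$ only the indices $h$ with $g(h)\le k$ survive, giving the stated value $\#\{h:1\le g(h)\le k\}/i$. The main obstacle I anticipate is the inner‑sum collapse: one must verify that the reparametrization genuinely decouples the constraint $\sum_j g(j)x_j=k-g(h)$ so that the multinomial theorem applies and the inner sum is independent of the target $k-g(h)$. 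This is the delicate point of the argument; it is exactly the reduction established for $Y(t)$, which I would invoke verbatim under the substitution $j\mapsto g(j)$, the weights $g(j)$ entering only through the bookkeeping of admissible last jumps and not through the collapsing computation itself.
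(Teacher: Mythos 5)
Your proposal is a faithful reconstruction of the paper's own argument --- the same integration of the hitting density, the same Gamma evaluation, the same multinomial collapse --- and your unified counting of the admissible indices $h$ is in fact tidier than the paper's separate computation for $k\ge g(i)$. But the step you yourself single out as ``the delicate point'' is precisely where the argument fails, in your version and in the paper's alike. The reparametrization does \emph{not} decouple the constraint: the tuples $(x_1,\ldots,x_i)$ with $\sum_{j=1}^i g(j)x_j=k-g(h)$ do not all share the same value of $x=x_1+\cdots+x_i$, so the inner sum cannot be rewritten as $\sum_{n_1+\cdots+n_i=x}\frac{x!}{n_1!\cdots n_i!}\,i^{-x}$ for any single $x$, and the multinomial identity $\bigl(\tfrac1i+\cdots+\tfrac1i\bigr)^x=1$ cannot be invoked. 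What the inner sum actually equals is
\begin{equation*}
\sum_{\substack{x_1,\ldots,x_i\ge 0\\ \sum_{j=1}^i g(j)x_j=m}}\frac{(x_1+\cdots+x_i)!}{x_1!\cdots x_i!}\,i^{-(x_1+\cdots+x_i+1)}
=\frac{1}{i}\sum_{x=0}^{\infty}\mathbb{P}[S_x=m]
=\frac{1}{i}\,\mathbb{P}\bigl[\exists\,x\ge 0: S_x=m\bigr],
\end{equation*}
where $S_x$ is the random walk with i.i.d.\ steps uniform on $\{g(1),\ldots,g(i)\}$ (the embedded jump chain of $Z$), and the last equality holds because the walk is strictly increasing and so visits $m$ at most once. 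This quantity genuinely depends on the target $m=k-g(h)$ and is in general strictly smaller than $\tfrac1i$.

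A concrete check: take $g(j)=j$ and $i=2$ (so $Z$ coincides with $Y$) and $k=g(i)=2$. The admissible tuples for $m=2$ are $(x_1,x_2)=(2,0)$ and $(0,1)$, so the inner sum is $\tfrac{2!}{2!}2^{-3}+2^{-2}=\tfrac38$, not the claimed $\tfrac12$; correspondingly
\begin{equation*}
\mathbb{P}[T_2<\infty]=\lambda\int_0^\infty\bigl(e^{-2\lambda s}+\lambda s\,e^{-2\lambda s}\bigr)\,ds=\tfrac12+\tfrac14=\tfrac34<1,
\end{equation*}
which is also clear pathwise: level $2$ is reached only if the first jump is $+2$, or the first two jumps are both $+1$. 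So the collapse fails, the stated formula is false for this example, and invoking ``verbatim'' the reduction from the $Y(t)$ theorem cannot repair anything, because that proof commits the identical error (there, too, the constraint is $\sum_j jx_j=n$, not $\sum_j x_j=x$). The first half of your proposal --- Tonelli, the Gamma integral, the cancellation of the powers of $\lambda$ --- is sound; what is missing is a correct evaluation of the renewal-type quantities $u_m=\mathbb{P}[\exists\,x: S_x=m]$, in terms of which the true answer reads $\mathbb{P}[T_k<\infty]=\tfrac1i\sum_{h:\,g(h)\le k}u_{k-g(h)}$, and this agrees with the census formula $\#\{h:1\le g(h)\le k\}/i$ only in degenerate situations (for instance $k=1$, or all jumps of size $1$).
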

\begin{proof}
Let $1\leq k< g(i)$. We have that
\begin{align*}
\mathbb{P}[T_k < \infty]=&\int_0^{\infty} \mathbb{P}[T_k \in ds] = \int_0^{\infty}\sum_{\substack{{h=1}\\ g(h)\leq k}}^k \mathbb{P}(Y(s)=k-g(h)) \lambda ds\\=& \lambda\int_0^{\infty}\sum_{\substack{{h=1}\\ g(h)\leq k}}^k \left[ \sum_{\substack{x_1,x_2,\ldots ,x_i\geq 0\\\sum_{j=1}^{k}g(j)x_j=k-g(h)}} e^{-i\lambda s} \frac{(\lambda s)^{x_1+\ldots+x_i}}{x_1!\ldots x_i!}\right]ds\\
=& \lambda \sum_{\substack{{h=1}\\ g(h)\leq k}}^k \sum_{\substack{x_1,x_2,\ldots ,x_i\geq 0\\\sum_{j=1}^{i}g(j)x_j=k-g(h)}} \int_0^{\infty} e^{-i\lambda s} \frac{(\lambda s)^{x_1+\ldots+x_i}}{x_1!\ldots x_i!}ds \\ 
=& \lambda \sum_{\substack{{h=1}\\ g(h)\leq k}}^k \sum_{\substack{x_1,x_2,\ldots ,x_i\geq 0\\\sum_{j=1}^{i}g(j)x_j=k-g(h)}}\frac{\lambda^{x_1+\ldots+x_i}}{x_1!\ldots x_i!}\int_0^{\infty} e^{-i\lambda s}s^{x_1+\ldots+x_i}ds \\
=& \lambda \sum_{\substack{{h=1}\\ g(h)\leq k}}^k \sum_{\substack{x_1,x_2,\ldots ,x_i\geq 0\\\sum_{j=1}^{i}g(j)x_j=k-(h)}}\frac{\lambda^{x_1+\ldots+x_i}}{x_1!\ldots x_i!} \frac{\Gamma(x_1+\ldots+x_i+1)}{(i\lambda)^{x_1+\ldots+x_i+1}}\\
=& \sum_{\substack{{h=1}\\ g(h)\leq k}}^k \sum_{\substack{x_1,x_2,\ldots ,x_i\geq 0\\\sum_{j=1}^{i}g(j)x_j=k-g(h)}}\frac{\Gamma(x_1+\ldots+x_i+1)}{x_1!\ldots x_i!}k^{-(x_1+\ldots+x_i+1)}\\ 
=& \sum_{\substack{{h=1}\\ g(h)\leq k}}^k \frac{1}{i} \\
 =& \frac{\#(h:1\leq g(h)\leq k)}{i}.
\end{align*}
Now assume $ k \geq g(i)$, we get distribution of $T_k$ as
\begin{align}
\mathbb{P}[T_k \in ds] =&\sum_{h=k-g(i)}^{k-g(1)} \mathbb{P}[Z(s)=k-h,Z(s+ds)=k]\nonumber\\
=& \sum_{h=k-g(i)}^{k-g(1)}\mathbb{P}[Z(s)=k-h,Z(s)+dZ(ds)=k]\nonumber\\
=& \sum_{h=k-g(i)}^{k-g(1)}\mathbb{P}[Z(s)=k-h]\mathbb{P}[dZ(s)=h]\nonumber\\
=&\lambda ds\sum_{h=k-g(i)}^{k-g(1)}\mathbb{P}[Z(s)=k-h]\nonumber\\
=& \lambda ds \sum_{h=g(1)}^{g(i)}\mathbb{P}[Z(s)=h]ds. \label{ws-tk1}
\end{align}
We now obtain the hitting time probability as follows
\begin{align}
\mathbb{P}[T_k<\infty] =& \int_0^{\infty}\mathbb{P}[T_k \in ds]\nonumber\\
=&\lambda\int_0^{\infty}\sum_{h=g(1)}^{g(i)}\mathbb{P}[Z(s)=h]ds\nonumber\\
=& \lambda\int_0^{\infty}\sum_{h=g(1)}^{g(i)} \left[ \sum_{\substack{x_1,\ldots ,x_i\geq 0\\\sum_{j=1}^{i}g(j)x_j=h}} e^{-i\lambda s} \frac{ (\lambda s)^{x_1+\ldots+x_i}}{x_1!\ldots x_i!} \right]\nonumber\\
=& \lambda \sum_{h=g(1)}^{g(i)} \sum_{\substack{x_1,\ldots ,x_i\geq 0\nonumber\\
\sum_{j=1}^{i}g(j)x_j=h}}\frac{\lambda^{x_1+\ldots+x_i}}{x_1!\ldots x_i!}\int_0^{\infty}e^{-i\lambda s} s^{x_1+\ldots+x_i}ds\nonumber\\
=& \sum_{h=g(1)}^{g(i)}\sum_{\substack{x_1,\ldots ,x_i\geq 0.\nonumber\\
\sum_{j=1}^{i}g(j)x_j=h}}\frac{x_1+\ldots+x_i!}{{x_1!\ldots x_i!}}i^{-(x_1+\ldots+x_i+1)}
& \intertext{Set $x=x_1+\ldots+x_i,x_j=n_j,1\leq j\leq i$ and $h=x+\sum_{j=1}^{i}[g(j)-1]x_j$ in the inner summation}
=& \sum_{h=g(1)}^{g(i)} \sum_{\substack{n_1,\ldots ,n_i\geq 0\\
\sum_{j=1}^{i}n_j=x}}\frac{(n_1+\ldots +n_i)!}{{n_1!\ldots  n_i!}}i^{-(n_1+\ldots +n_i+1)}= \sum_{h=g(1)}^{g(i)} \frac{1}{i}= 1. \label{ws-tk2}
\end{align}
From \eqref{ws-tk1} and \eqref{ws-tk2}, we complete the proof.
\end{proof}
\section{Weighted sums of subordinated Poisson processes}
In this section, we study the weighted sums of subordinated Poisson process time-changed by the L\'evy subordinator. We derive its characterisation in the form of the compound Poisson process and compute its hitting time probabilities. The study of the of subordinated Poisson process is done by Maheshwari and Vellaisamy (2019) (see \cite{Maheshwari2019}) and results about their hitting time probabilities can be found in Orsingher and Toaldo (2015) (see \cite{OrsToa-Berns}). We now briefly introduce the L\'evy subordinator.

Let $f$ be a Bern\v stein function with integral representation 
\begin{equation}
f(x)=\int_{0}^{\infty}(1-e^{-sx})\nu(ds),
\end{equation}
where $\nu$ is the L\'evy measure, that is a non-negative measure such that
\begin{equation}
\int_{0}^{\infty}\min(s,1)\nu(ds)<\infty.
\end{equation}
An alternative definition of Bern\v stein function is that, for all $n$ it happens that
$$(-1)^nf^{(n)}(x)\leq 0.$$
We denote by $H^f$ the subordinator related to the Bern\v stein function $f$ or to the related L\'evy measure $\nu$. \\
The Laplace transform of $H^f$ reads
\begin{equation}
\mathbb{E}[e^{-\mu H^f(t)}]=e^{-tf(\mu)}.
\end{equation}
For example, if $f(\mu)=\mu^\alpha,0<\alpha<1,~\nu(dx)=\frac{\alpha \mu^{\alpha-1}}{\Gamma(1-\alpha)}$, we have the stable subordinator of order $\alpha$.
We now prove the equality in distribution of $W$ with its compound Poisson representation.

\begin{theorem}The following identity holds
	\begin{equation}\label{thm1:sec4}
		W(t)=\sum_{j=1}^{i}jN_j(H^f(t),\lambda)\stackrel{d}{=}\sum_{j=1}^{N(H^f(t),i\lambda)}X_j^i,
	\end{equation}
where $X_j,j=1,2\ldots$ are independent, uniformly distributed random variables (on the set of natural numbers $1\leq j\leq i$), $N_j(\bullet,\lambda)$ are independent homogeneous Poisson processes as well as $N_j(\bullet,i\lambda)$. Of course $H^f(t)$ are L\'evy subordinators related to the Bern\v stein function $f$.

\end{theorem}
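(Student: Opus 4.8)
The plan is to compute the probability generating functions of both sides by first conditioning on the value of the subordinator $H^f(t)$ and then integrating against its law, exploiting the Laplace transform identity $\mathbb{E}[e^{-\mu H^f(t)}]=e^{-tf(\mu)}$. The crucial structural point is that the conditional generating functions of the two sides, given $H^f(t)=y$, will already coincide by the results proved earlier for the un-subordinated processes.

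First I would condition on $\{H^f(t)=y\}$. Since the Poisson processes $N_j$ are independent of $H^f$, the conditional law of the left-hand side is that of the Poisson process of order $i$ frozen at time $y$, whose generating function is exactly \eqref{pgf-yt}, namely
$$\mathbb{E}\big[u^{\sum_{j=1}^{i}jN_j(y,\lambda)}\big]=e^{-i\lambda y+\lambda y\sum_{j=1}^{i}u^{j}}=\exp\Big[-\lambda y\Big(i-\sum_{j=1}^{i}u^{j}\Big)\Big].$$
For the right-hand side, conditioning on $H^f(t)=y$ reduces it to a compound Poisson sum with Poisson parameter $i\lambda y$ and summands uniform on $\{1,\dots,i\}$, whose generating function was computed in the proof of the theorem on $Z(t)$ (with $g(j)=j$):
$$\mathbb{E}\big[u^{\sum_{j=1}^{N(y,i\lambda)}X_j^{i}}\big]=\exp\Big[-i\lambda y\Big(1-\tfrac{1}{i}\sum_{j=1}^{i}u^{j}\Big)\Big]=\exp\Big[-\lambda y\Big(i-\sum_{j=1}^{i}u^{j}\Big)\Big].$$
Thus the two conditional generating functions are identical.

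Writing $\mu=\mu(u):=\lambda\big(i-\sum_{j=1}^{i}u^{j}\big)$, which is nonnegative for $u\in[0,1]$ since $\sum_{j=1}^{i}u^{j}\leq i$, I would then integrate out the law of $H^f(t)$ using the Laplace transform, obtaining for both sides
$$\mathbb{E}[u^{W(t)}]=\mathbb{E}\big[e^{-\mu H^f(t)}\big]=e^{-tf(\mu)}=\exp\Big[-tf\Big(\lambda\Big(i-\sum_{j=1}^{i}u^{j}\Big)\Big)\Big].$$
Since a probability generating function determines the law of an $\mathbb{N}$-valued random variable, the equality of these generating functions yields the claimed identity in distribution.

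The computation is essentially routine once the conditioning is in place; the only points that require care are verifying that $\mu(u)\geq 0$ so that the Laplace-transform identity is applied on its valid domain, and justifying the interchange of the conditional expectation with the subordinator's law through the assumed independence of $H^f$ from the Poisson processes. I expect this bookkeeping, rather than any genuine difficulty, to be the main thing to get right.
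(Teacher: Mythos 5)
Your proposal is correct and follows essentially the same route as the paper: condition on $H^f(t)=y$, observe that both conditional probability generating functions equal $\exp\bigl[-\lambda y\bigl(i-\sum_{j=1}^{i}u^{j}\bigr)\bigr]$, and then integrate out the subordinator via $\mathbb{E}[e^{-\mu H^f(t)}]=e^{-tf(\mu)}$ to get $e^{-tf[i\lambda-\lambda(u+\cdots+u^{i})]}$ for both sides. The only cosmetic differences are that you cite the earlier unsubordinated results (the pgf \eqref{pgf-yt} and the compound Poisson representation of $Z(t)$ with $g(j)=j$) instead of recomputing them, and you explicitly check $\mu(u)\geq 0$ for $u\in[0,1]$, a point the paper leaves implicit.
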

\begin{proof}
	The probability generating function of \eqref{thm1:sec4} reads

\begin{align*}
\mathbb{E}[u^{\sum_{j=1}^{N(H^f(t),i\lambda)}X_j^i}] =& \mathbb{E}[\mathbb{E}[u^{\sum_{j=1}^{N(H^f(t),i\lambda)}X_j^i}|H^f(t)]]\\
=& \mathbb{E}[e^{-i\lambda H^f(t)+\lambda H^f(t)(u+u^2+\ldots +u^i)}]\\
=&\mathbb{E}[e^{-[i\lambda-\lambda(u+u^2+\ldots +u^i)]H^f(t)}]\\
=&e^{-tf[i\lambda-\lambda(u+u^2+\ldots +u^i)]}.
\end{align*}
On the other hand
\begin{align*}
\mathbb{E}[u^{\sum_{j=1}^i jN_j(H^f(t))}] =&
\int_{0}^{\infty} \mathbb{E}[u^{\sum_{j=1}^i
jN_j(H^f(t))}|H^f(t)=y]\mathbb{P}[H^f(t)\in dy]\\
=&\int_{0}^{\infty} \mathbb{E}[u^{\sum_{j=1}^i jN_j(y)}]
\mathbb{P}[H^f(t)\in dy]\\
=&\int_{0}^{\infty} e^{-i\lambda y+\lambda y(u+u^2+\ldots +u^i)} \mathbb{P}[H^f(t)\in dy]\\
=& \mathbb{E}[e^{-[i\lambda-\lambda(u+u^2+\ldots +u^i)]H^f(t)}]\\
=&e^{-tf[i\lambda-\lambda(u+u^2+\ldots +u^i)]}
\end{align*}
and this concludes the proof.
\end{proof}
\noindent We now pass to the analysis of the hitting times of the process  $W(t)=\sum_{j=1}^{i}jN_j(H^f(t))$. We observe that also in this case we can write for 
$$T_k=\inf\{s<t:W(s)=k\}$$
that
\begin{align*}
\mathbb{P}[T_k \in ds] =& \sum_{h=1}^k \mathbb{P}[W(s)=k-h, W(s+ds)=k]\\
=&\sum_{h=1}^k \mathbb{P}[W(s)=k-h,N_1(H^f(s)+dH^f(s))+\ldots +iN_i(H^f(s)+dH^f(s))=k]\\
=&\sum_{h=1}^k \mathbb{P}[W(s)=k-h, N_1(dH^f(s))+\ldots +iN_i(dH^f(s))=h]\\
=& \sum_{h=1}^k
\mathbb{P}[W(s)=k-h] \mathbb{P}[dW(s)=h],
\end{align*}
where 
\begin{equation}\label{ws:incre}
\mathbb{P}[dW(s)=h] = \begin{cases} 1-(ds)f(i \lambda)+ o(h),&h=0 \\ 
&\\(-ds)\left(\sum\limits_{\substack{x_1,x_2,\ldots ,x_i\geq 0\\\sum_{j=1}^{i}jx_j=h}} \frac{ (- \lambda )^{x_1+\ldots+x_i}}{x_1!\ldots x_i!}\frac{1}{i^{x_1+\ldots+x_i}}\frac{d^{(x_1+\ldots+x_i)}}{d\lambda^{(x_1+\ldots+x_i)}}f(i\lambda)\right)  +o(ds),&h=1,2,\ldots  \end{cases}. 
\end{equation}
and
\begin{align*}
\mathbb{P}[W(s)=k-h]=& \int_0^{\infty}\mathbb{P}[W(s)=k-h|H^f(s)=y]\mathbb{P}[H^f(s)\in dy]\\
=& \int_0^{\infty}\mathbb{P}[N_1(y)+\ldots +iN_i(y)=k-h]\mathbb{P}[H^f(s)\in dy]\\
=&\int_0^{\infty}\sum_{\substack{x_1,\ldots ,x_i\geq 0\\\sum_{j=1}^{i}jx_j=k-h}}e^{-i\lambda y} \frac{ (\lambda y)^{x_1+\ldots +x_i}}{x_1!\ldots  x_i!}\mathbb{P}[H^f(s)\in dy]\\
=&\sum_{\substack{x_1,\ldots ,x_i\geq 0\\\sum_{j=1}^{i}jx_j=k-h}} \frac{ \lambda^{x_1+\ldots +x_i}}{x_1!\ldots  x_i!}\int_0^{\infty} e^{-i\lambda y} y^{x_1+\ldots +x_i}\mathbb{P}[H^f(s)\in dy]\\
=&\sum_{\substack{x_1,\ldots ,x_i\geq 0\\\sum_{j=1}^{i}jx_j=k-h}} \frac{(- \lambda)^{x_1+\ldots +x_i}}{x_1!\ldots  x_i!}\frac{1}{i^{x_1+\ldots+x_i}}\int_0^{\infty}\dfrac{d^{x_1+\ldots +x_i}(e^{-i\lambda y})}{d\lambda^{x_1+\ldots +x_i}}\mathbb{P}[H^f(s)\in dy]\\
=& \sum_{\substack{x_1,\ldots ,x_i\geq 0\\\sum_{j=1}^{i}jx_j=k-h}} \frac{(- \lambda)^{x_1+\ldots +x_i}}{x_1!\ldots  x_i! i^{x_1+\ldots+x_i}}\dfrac{d^{x_1+\ldots +x_i}}{d\lambda^{x_1+\ldots +x_i}}\int_0^{\infty}e^{-i\lambda y} \mathbb{P}[H^f(s)\in dy]\\
=& \sum_{\substack{x_1,\ldots ,x_i\geq 0\\\sum_{j=1}^{i}jx_j=k-h}} \frac{(- \lambda)^{x_1+\ldots +x_i}}{x_1!\ldots  x_i!i^{x_1+\ldots+x_i}}\dfrac{d^{x_1+\ldots +x_i}}{d\lambda^{x_1+\ldots +x_i}}\mathbb{E}[e^{-i\lambda H^f(s)}]\\
=& \sum_{\substack{x_1,\ldots ,x_i\geq 0\\\sum_{j=1}^{i}jx_j=k-h}} \frac{(- \lambda)^{x_1+\ldots +x_i}}{x_1!\ldots  x_i!i^{x_1+\ldots+x_i}}\dfrac{d^{x_1+\ldots +x_i}}{d\lambda^{x_1+\ldots +x_i}}e^{-sf(i\lambda)}.
\end{align*}
In conclusion the hitting time of the state $k$ has probability 
\begin{align*}
\mathbb{P}[T_k<\infty]&=\sum_{h=1}^{k} \sum_{\substack{x_1,\ldots ,x_i\geq 0\\\sum_{j=1}^{i}jx_j=k-h}} \frac{(- \lambda)^{x_1+\ldots +x_i}}{x_1!\ldots  x_i!i^{x_1+\ldots+x_i}}\dfrac{d^{x_1+\ldots +x_i}}{d\lambda^{x_1+\ldots +x_i}}\frac{1}{f(i\lambda)}\\ &~~~~\times\sum\limits_{\substack{x_1,\ldots ,x_i\geq 0\\\sum_{j=1}^{i}jx_j=h}} \frac{ (- \lambda )^{x_1+\ldots+x_i}}{x_1!\ldots x_i!}\frac{d^{(x_1+\ldots+x_i)}}{d\lambda^{(x_1+\ldots+x_i)}}f(i\lambda),
\end{align*}
as shown by integrating with respect to $s$ and substituting \eqref{ws:incre} in the expression of $\mathbb{P}[W(s)=k-h]$. This result shows that the explicit distribution of the hitting time $T_k$ crucially depends on the Bern\v stein  function $f$ as pointed out in \cite{garra2015} and \cite{OrsToa-Berns}.\\
We work a special case of the results obtained in here in the next section.
\section{A Special case}
We here study the process 
\begin{equation}\label{ut-process}
U(t)=\sum_{j=1}^{i}jN_j^\lambda(N^\beta(t)),
\end{equation}
which is a special case of $W(t)$, where the time change is performed by means of a homogeneous Poisson process of rate $\beta$ independent from $N_j^\lambda,1\leq j\leq i$. We prove in the next section that $U(t),t>0$ can be represented in the form of a compound Poisson as shown in the next Theorem.
\begin{theorem} The following relationship holds
	\begin{equation}\label{thm-splcase}
U(t)=\sum_{j=1}^{i}jN_j^\lambda(N^\beta(t))\stackrel{d}{=} \sum_{j=1}^{N^{i\lambda}(N^{\beta}(t))}X_j^i,
	\end{equation}
where the $X_j^i$ are independent random variables, uniformly distributed on the set $\{1,\ldots,i\}$ and $N^{i\lambda}(t)$ is a Poisson process of rate $i\lambda$ independent from $N^\beta(t)$ and independent from all the $X^i_j$ random variables.
\end{theorem}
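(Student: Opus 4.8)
The plan is to recognize $U(t)$ as the special case $H^f(t)=N^\beta(t)$ of the subordinated process $W(t)$, so that \eqref{thm-splcase} will follow from the representation \eqref{thm1:sec4} once the relevant Bern\v stein function is identified. A homogeneous Poisson process of rate $\beta$ is itself a L\'evy subordinator, with
$$\mathbb{E}[e^{-\mu N^\beta(t)}]=e^{-\beta t(1-e^{-\mu})},$$
i.e. it is associated with the Bern\v stein function $f(\mu)=\beta(1-e^{-\mu})$, and substituting this choice into the general result would give the claim directly. For completeness I would instead verify the identity by comparing probability generating functions, which is short.

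First I would compute the generating function of the right-hand side by iterated conditioning. Conditioning on the inner value $N^\beta(t)$ and using $\mathbb{E}[u^{X^i}]=\tfrac1i\sum_{j=1}^i u^j$, the compound Poisson sum driven by $N^{i\lambda}$ has conditional generating function $\exp[-i\lambda N^\beta(t)+\lambda N^\beta(t)\sum_{j=1}^i u^j]$. Taking the outer expectation and applying the Poisson Laplace transform with argument $\mu=i\lambda-\lambda\sum_{j=1}^i u^j$ yields
$$\mathbb{E}\Big[u^{\sum_{j=1}^{N^{i\lambda}(N^{\beta}(t))}X_j^i}\Big]=\exp\Big[-\beta t\big(1-e^{-i\lambda+\lambda\sum_{j=1}^i u^j}\big)\Big].$$

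Next I would treat the left-hand side the same way: conditioning on $N^\beta(t)$ and invoking the generating function \eqref{pgf-yt} of the Poisson process of order $i$, the conditional generating function of $\sum_{j=1}^i jN_j^\lambda(N^\beta(t))$ is again $\exp[-i\lambda N^\beta(t)+\lambda N^\beta(t)\sum_{j=1}^i u^j]$, so the outer expectation reproduces exactly the same double-exponential expression. Equality of the generating functions for all admissible $u$ then gives the equality in distribution \eqref{thm-splcase}. I do not expect a genuine obstacle: the only care needed is in the bookkeeping of the double conditioning — note that the inner time change $N^\beta(t)$ is integer-valued, though this plays no role — and in checking that both sides collapse to the identical nested exponential $\exp[-\beta t(1-e^{-i\lambda+\lambda\sum_{j=1}^i u^j})]$, which is precisely the composition of $f(\mu)=\beta(1-e^{-\mu})$ with the generating-function exponent expected from the subordinator picture.
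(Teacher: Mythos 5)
Your proposal is correct, and its computational half coincides with the paper's own proof: the paper likewise conditions on $N^\beta(t)$, uses the conditional generating function $\exp\left[-i\lambda k+\lambda k\sum_{j=1}^{i}u^j\right]$ for each side, and sums against the Poisson weights $e^{-\beta t}(\beta t)^k/k!$ to arrive at the common expression $\exp\left[-\beta t\left(1-e^{-i\lambda+\lambda\sum_{j=1}^{i}u^j}\right)\right]$. What is genuinely different, and worth keeping, is your opening observation: the theorem is an immediate corollary of the general representation \eqref{thm1:sec4}, because a homogeneous Poisson process of rate $\beta$ is itself a L\'evy subordinator whose Bern\v stein function is $f(\mu)=\beta(1-e^{-\mu})$ (L\'evy measure $\beta\delta_1$, a finite measure, hence admissible), so substituting this $f$ into the generating function $e^{-tf[i\lambda-\lambda(u+u^2+\cdots+u^i)]}$ obtained there yields exactly the nested exponential above for both sides at once. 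The paper itself remarks at the start of Section 5 that $U$ is a special case of $W$, but then redoes the generating-function computation from scratch rather than invoking the earlier theorem; your shortcut makes that remark do actual work and avoids the duplicated calculation, at the small cost of checking that the independence hypotheses of \eqref{thm1:sec4} are met here (they are, since $N^\beta$ is independent of the $N_j^\lambda$, of $N^{i\lambda}$, and of the $X_j^i$). Either route is complete as you present it.
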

\begin{proof}
The probability generating function of the right-hand side of \eqref{thm-splcase} is
\begin{align*}
\mathbb{E}[u^{\sum_{j=1}^{N^{i\lambda}(N^{\beta}(t))}X_j^i}] =& \sum_{k=0}^{\infty}\left(\mathbb{E}u^X\right)^k\sum_{r=0}^{\infty}\mathbb{P}[N^\lambda(r)=k]\mathbb{P}[N^\beta(t)=r]\\
=&\sum_{r=0}^{\infty}e^{-\beta t}\frac{(\beta t)^r}{r!}e^{-i\lambda r+\lambda ir\mathbb{E}u^X}\\
=&\sum_{r=0}^{\infty}e^{-\beta t}\frac{(\beta t)^r}{r!}e^{-i\lambda r+\lambda  r(u^1+\ldots+u^i)}\\
=&e^{-\beta t+\beta t(e^{-[i\lambda-\lambda(u^1+\ldots +u^i)]})}.
\end{align*}
The probability generating function of the left-hand side of \eqref{thm-splcase} writes
\begin{align*}
\mathbb{E}[u^{\sum_{j=1}^i jN_j^\lambda(N^{\beta}(t))}] =&
\sum_{k=0}^{\infty}\prod_{j=1}^{i} \mathbb{E}[u^{jN_j^\lambda(N^{\beta}(t))}|N^{\beta}(t)=k]\mathbb{P}[N^{\beta}(t)=k]\\
=&\sum_{k=0}^{\infty}\prod_{j=1}^{i} e^{-\lambda k(1-u^j)}e^{-\beta t}\frac{(\beta t)^k}{k!}\\
=&\sum_{k=0}^{\infty} e^{-\lambda ki+\lambda k(u^1+\ldots+u^j)}e^{-\beta t}\frac{(\beta t)^k}{k!}\\
=&e^{-\beta t+\beta t(e^{-[i\lambda-\lambda(u^1+\ldots +u^i)]})},
\end{align*}
and this concludes the proof.
\end{proof}

\begin{remark}
In view of \eqref{pgf-yt}, we can write down the distribution  of process \eqref{ut-process} as 
\begin{align}
\mathbb{P}[U(t)=m]&=\sum_{r=1}^{\infty}\mathbb{P}\left[\sum_{j=1}^{i}jN_j^\lambda(r)=m\right]\mathbb{P}\left[N^\beta(t)=r\right],m\geq 1\nonumber\\
&=\sum_{r=1}^{\infty}e^{-\beta t}\frac{(\beta t)^r}{r!}\sum_{\substack{x_1,x_2,\ldots ,x_i\geq 0\\\sum_{j=1}^{i}jx_j=m}} e^{-i\lambda r} \frac{(\lambda r)^{x_1+\ldots+x_i}}{x_1!\ldots x_i!}\nonumber\\
&=\sum_{\substack{x_1,x_2,\ldots ,x_i\geq 0\\\sum_{j=1}^{i}jx_j=m}}\frac{\lambda ^{x_1+\ldots+x_i}}{x_1!\ldots x_i!}\mathbb{E}\left[e^{-i\lambda N^\beta(t)}(N^\beta (t))^{x_1+\ldots+x_i}\right].\label{pmf-ut}
\end{align}
\end{remark}
\noindent We now consider the hitting time distribution of the process $U(t),t>0$.\\
We can write that 
\begin{equation}\label{spl-tk}
\mathbb{P}[T_k\in ds]=\sum_{h=1}^{k}\mathbb{P}[U(s)=k-h]\mathbb{P}[dU(s)=k].
\end{equation}
Note that the increment of the iterated Poisson process $N^\alpha(N^\beta (t))$ has distribution

\begin{equation}
\mathbb{P}[dN^\alpha(N^\beta (t))=r] =
\left\{
\begin{array}{ll}
\beta  e^{-\alpha}ds+1-\beta ds  & \mbox{if } r= 0 \\&\\
e^{-\alpha}\dfrac{\alpha}{r!}\beta ds & \mbox{if } r\geq 1
\end{array}
\right.
\end{equation}
as can be ascertained by observing that 
$$\mathbb{P}[dN^\alpha(N^\beta (t))=r]=\sum_{m=1}^{\infty}\mathbb{P}[dN^\alpha(m)=r]\mathbb{P}[N^\beta(t)=m] $$
and that $dN^\beta (t)=0$ implies that $N^\alpha(0)=0$. The increment of the process $U(t)$ has distribution 
\begin{equation}\label{spl-utincr}
\mathbb{P}[dU(t)=r] =
\left\{
\begin{array}{ll}
1-\beta dt+e^{-\alpha i}\beta dt  & \mbox{if } r= 0 \\&\\
\beta dt\mathbb{P}\left[\sum_{j=1}^{i}jN_j^\alpha(1)=r\right] & \mbox{if } r\geq 1.
\end{array}
\right.
\end{equation}
From \eqref{pmf-yt} we have that 
\begin{equation}
\mathbb{P}\left[\sum_{j=1}^{i}jN_j^\alpha(1)=r\right]=\sum_{\substack{x_1,x_2,\ldots ,x_i\geq 0\\\sum_{j=1}^{i}jx_j=r}} e^{-i\alpha } \frac{\alpha^{x_1+\ldots+x_i}}{x_1!\ldots x_i!}.
\end{equation}
From \eqref{spl-tk} and formulas \eqref{pmf-ut} and \eqref{spl-utincr} we can write down the distribution of the hitting time, for $k\geq 1$
\begin{equation}
\mathbb{P}[T_k\in ds]=\sum_{h=1}^{k}\sum_{\substack{x_1,x_2,\ldots ,x_i\geq 0\\\sum_{j=1}^{i}jx_j=k-h}}\frac{\lambda ^{x_1+\ldots+x_i}}{x_1!\ldots x_i!}\mathbb{E}\left[e^{-i\lambda N^\beta(s)}(N^\beta (s))^{x_1+\ldots+x_i}\right]\beta ds\mathbb{P}\left[\sum_{j=1}^{i}jN_j^\alpha(1)=h\right].
\end{equation}
By integrating with respect to the time $s$ we get the hitting probability of the state $k$ as 
\begin{align}\label{hitting-ut}
\mathbb{P}[T_k<\infty] =&\sum_{h=1}^{k}\sum_{\substack{x_1,x_2,\ldots ,x_i\geq 0\\\sum_{j=1}^{i}jx_j=k-h}}\frac{\lambda ^{x_1+\ldots+x_i}}{x_1!\ldots x_i!}\sum_{r=0}^{\infty}e^{-i\lambda r}r^{x_1+\ldots+x_i}\mathbb{P}\left[\sum_{j=1}^{i}jN_j^\alpha(1)=h\right]
\end{align}
because 
\begin{align*}
\int_{0}^{\infty}\mathbb{E}\left[e^{-i\lambda N^\beta(s)}(N^\beta (s))^{x_1+\ldots+x_i}\right]ds&=\int_{0}^{\infty}\sum_{r=0}^{\infty}e^{-i\lambda r}r^{x_1+\ldots+x_i}\mathbb{P}\left[N^\beta(s)=r\right]ds\\
&=\frac{1}{\beta}\sum_{r=0}^{\infty}e^{-i\lambda r}r^{x_1+\ldots+x_i}.
\end{align*}
We are able to evaluate \eqref{pmf-ut} for $k=1$ and obtain
\begin{align*}
\mathbb{P}[T_1<\infty] =& \sum_{\substack{x_1,x_2,\ldots ,x_i\geq 0\\\sum_{j=1}^{i}jx_j=0}} \frac{\lambda^{x_1+\ldots+x_i}}{x_1!\ldots x_i!}\sum_{r=0}^{\infty}e^{-i\lambda r}\mathbb{P}\left[\sum_{j=1}^{i}jN_j^\alpha(1)=1\right]\\
=&\sum_{r=0}^{\infty}e^{-i\lambda r}\sum_{\substack{x_1,x_2,\ldots ,x_i\geq 0\\\sum_{j=1}^{i}jx_j=1}}e^{-i\lambda} \frac{\lambda^{x_1+\ldots+x_i}}{x_1!\ldots x_i!}\\
=&  e^{-i\lambda}\sum_{y=0}^{\infty}e^{-i\lambda y} \lambda= \lambda e^{-i\lambda} \frac{1}{1-e^{-i\lambda}}<1.
\end{align*}
This result coincides with equation (37) of Orsingher and Polito (2012) (see \cite{Orsingher2012}) for a rate $\lambda_\alpha=i\lambda$. For the hitting time $T_2$ we have that (see formula \eqref{hitting-ut})
\begin{align*}
\mathbb{P}[T_2<\infty] =& \sum_{r=0}^{\infty}e^{-i\lambda r}\mathbb{P}\left[\sum_{j=1}^{i}jN_j^\alpha(1)=2\right]+ \sum_{r=1}^{\infty}e^{-i\lambda r}\lambda r\mathbb{P}\left[\sum_{j=1}^{i}jN_j^\alpha(1)=1\right]\\
=& \sum_{r=0}^{\infty}e^{-i\lambda r}\left\{\lambda e^{-i\lambda}+\frac{\lambda^2}{2}e^{-i\lambda}\right\}+ \sum_{r=1}^{\infty}e^{-i\lambda r}\lambda r\lambda e^{-i\lambda}\\
=& e^{-i\lambda }\left(\lambda +\frac{\lambda^2}{2}\right)\frac{1}{1-e^{-i\lambda}}+ \lambda^2e^{-i\lambda}\sum_{r=1}^{\infty}re^{-i\lambda r}\\
=& \left(\lambda +\frac{\lambda^2}{2}\right)\frac{e^{-i\lambda}}{1-e^{-i\lambda}}+ \frac{\lambda^2e^{-2i\lambda}}{(1-e^{-i\lambda})^2}\\
=&\left(1+\frac{\lambda}{2}\right)\mathbb{P}[T_1<\infty]+\left(\mathbb{P}[T_1<\infty]\right)^2>\mathbb{P}[T_1<\infty].
\end{align*}
Furthermore $\mathbb{P}[T_2<\infty]<1$ because 
\begin{equation*}
 \left(\lambda +\frac{\lambda^2}{2}\right)\frac{e^{-i\lambda}}{1-e^{-i\lambda}}+ \frac{\lambda^2e^{-2i\lambda}}{(1-e^{-i\lambda})^2}<1.
\end{equation*}
This inequality, after same manipulation becomes
\begin{align*}
e^{-i\lambda}\left(\lambda +\frac{\lambda^2}{2}\right)+e^{-2i\lambda}\left(-\lambda +\frac{\lambda^2}{2}\right)&<\left(1-e^{-i\lambda}\right)^2\\
&=1+e^{-2\lambda i}-2e^{-i\lambda}
\shortintertext{or}
e^{-i\lambda}\left(\lambda +\frac{\lambda^2}{2}+2\right)+e^{-2i\lambda}\left(-1-\lambda +\frac{\lambda^2}{2}\right)&<1.
\shortintertext{The left-hand side of the above equation can be increased as}
e^{-i\lambda}\left[\lambda +\frac{\lambda^2}{2}+2+\frac{\lambda^2}{2}-\lambda-1\right]&=\left(1+\lambda^2\right)e^{-i\lambda}<1
\end{align*}
since $1+\lambda^2<e^{i\lambda}$.\\\\
For the iterated Poisson process $N^\alpha(N^\beta(t)),t>0$, it is possible to write the hitting probability for all $k\geq 1$ in a simple form as 
\begin{equation*}
\mathbb{P}[T_k<\infty]=e^{-\lambda_\alpha}\frac{\lambda_\alpha^k}{k!}\sum_{j=0}^{\infty}e^{-\lambda_\alpha j}\left[(j+1)^k-j^k\right], ~k\geq 1,
\end{equation*}
which coincides with equation (36) of Orsingher and Polito (2012) (see \cite{Orsingher2012})).
\bibliographystyle{abbrv}
\bibliography{researchbib}
\end{document}